\documentclass{amsart}
\usepackage[utf8]{inputenc}
\usepackage{amsmath}
\usepackage{amssymb}
\usepackage{amsthm}
\usepackage{enumitem}
\usepackage{todonotes}
\usepackage{hyperref}
\usepackage{url}
\usepackage{xcolor}
\usepackage{graphicx}
\usepackage{comment}

\usepackage{tikz}
\usetikzlibrary{cd}

\setlist[enumerate]{label=\rm{(\arabic*)}, ref=(\arabic*)}

\DeclareMathOperator{\St}{St}
\DeclareMathOperator{\Aut}{Aut}
\DeclareMathOperator{\End}{End}

\DeclareMathOperator{\Sym}{Sym}
\DeclareMathOperator{\TC}{\mathcal{TC}}

\newcommand{\N}{\mathbb{N}}
\newcommand{\Z}{\mathbb{Z}}

\newcommand*{\M}{\mathcal{M}}

\newcommand*{\tile}[6]{
\draw (0,{(0+#1)*#2}) -- (0,{(1+#1)*#2}) node [midway, left] {#3};
\draw (0,{(1+#1)*#2}) -- (#2,{(1+#1)*#2}) node [midway, above] {#5};
\draw (0,{(0+#1)*#2}) -- (#2,{(0+#1)*#2}) node [midway, below] {#4};
\draw (#2,{(0+#1)*#2}) -- (#2,{(1+#1)*#2}) node [midway, right] {#6};
}

\newtheorem{thm}{Theorem}[section]
\newtheorem{lemma}[thm]{Lemma}
\newtheorem{prop}[thm]{Proposition}
\newtheorem{cor}[thm]{Corollary}

\theoremstyle{definition}

\newtheorem{defn}[thm]{Definition}

\newtheorem{notation}[thm]{Notation}

\newtheorem{thmx}{Theorem}[section]

\title{Bireversible automata generating lamplighter groups}
\author{Dominik Francoeur}
\thanks{This project was supported by the Spanish Ministry of Science and Innovation Grant CEX2019-000904-S funded by MCIN/AEI/ 10.13039/501100011033.}

\begin{document}

\begin{abstract}
For every non-trivial finite abelian group $A$, we exhibit a bireversible automaton generating the lamplighter group $A \wr \Z$.
\end{abstract}

\maketitle

\section{Introduction}

A \emph{Mealy automaton} is a tuple $\M=(Q,L,\lambda, \rho)$ where $Q$ and $L$ are finite sets called respectively the \emph{set of states} and the \emph{alphabet}, and $\lambda\colon Q\times L \rightarrow L$ and $\rho\colon Q\times L \rightarrow Q$ are two maps called respectively the \emph{output map} and the \emph{transition map}. The automaton $\M$ is called \emph{invertible} if, for all $q\in Q$, the map $\lambda(q,\cdot)\colon L \rightarrow L$ is a bijection, and it is called \emph{reversible} if the map $\rho(\cdot, l)\colon Q \rightarrow Q$ is a bijection for all $l\in L$. Lastly, $\M$ is said to be \emph{bireversible} if it is invertible, reversible, and if the map $\delta \colon Q\times L \rightarrow L \times Q$ sending $(q,l)$ to $(\lambda(q,l),\rho(q,l))$ is a bijection.

Let us denote by $L[[t]]$ the set of formal power series with coefficients in $L$, i.e. the  set of elements of the form $s_0+s_1t + s_2t^2+s_3t^3+\dots$, where $s_i\in L$. Let us also denote by $\sigma\colon L[[t]] \rightarrow L[[t]]$ the \emph{shift map}, i.e. the map that sends $f(t)=s_0+s_1t + s_2t^2+s_3t^3+\dots$ to $\sigma(f(t)) = s_1+s_2t+s_3t^2 + \dots$. To make the notation more legible, we will often denote  $\sigma(f(t))$ by $f^{\sigma}(t)$.

Every $q\in Q$ recursively defines a map $\tau_q\colon L[[t]]\rightarrow L[[t]]$ by
\[\tau_q(f(t)) = \lambda(q,s_0)+ \tau_{\rho(q,s_0)}(f^{\sigma}(t))t\] 
where $f(t)=s_0+f^{\sigma}(t)t$. Notice that if $\M$ is invertible, then $\tau_q$ is an invertible map for every $q\in Q$. Thus, in this case, one can look at the group of permutations of $L[[t]]$ generated by all the states of $\M$.

\begin{defn}\label{defn:AutomatonGroup}
Let $\M=(Q,L,\lambda, \rho)$ be an invertible Mealy automaton. The \emph{group generated by $\M$} is the subgroup $G_{\M}\leq \Sym(L[[t]])$ generated by the set $\{\tau_q \mid q\in Q\}$.
\end{defn}

Groups generated by (invertible Mealy) automata can exhibit a wide range of properties, some of which are quite exotic, such as being finitely generated infinite torsion \cite{GuptaSidki83} or of intermediate growth \cite{Grigorchuk83}. The presence of such rare behaviour was one of the motivation behind the current interest in groups generated by automata.

If one restricts oneself to groups generated by bireversible automata, the range of  possible behaviour appears to reduce considerably. For instance, such a group must contain a free subsemigroup, and thus be of exponential growth, as soon as it contains an element of infinite order \cite{Klimann18, FrancoeurMitrofanov21} (note that it is currently unknown whether being infinite is equivalent to containing an element of infinite order for such a group). In return, however, the bireversibility of the automaton gives these groups additional structure that makes them very interesting in other respects. Indeed, they are related to $CAT(0)$ square complexes \cite{GlasnerMozes05} and to the commensurator of the free group in the automorphism group of a regular tree \cite{MacedonskaNekrashevychSushchansky00}, and their action on the Cantor space $L[[t]]$ is essentially free, which makes them well-suited to computations of the spectrum of random walks on their Cayley graph (see \cite{KambitesSilvaSteinberg06}).

Currently, however, very few examples of groups generated by bireversible automata are known. The most notable among these are possibly the non-abelian free groups of finite rank, a result due to Steinberg, Vorobets and Vorobets \cite{SteinbergVorobetsVorobets11}, following previous work by Vorobets and Vorobets \cite{VorobetsVorobets10}. More recently, Skipper and Steinberg have obtained another important and quite different family of examples, namely groups of the form $A\wr \Z$, where $A$ is a finite abelian group \cite{SkipperSteinberg20}. Such groups are sometimes called \emph{lamplighter groups}. However, Skipper and Steinberg's result does not cover all possible lamplighter groups over finite abelian groups. Indeed, they require some technical condition on the Sylow $2$-subgroup of the group $A$ (we refer the reader to \cite{SkipperSteinberg20} for the precise condition). In particular, their result does not cover the classical lamplighter group $(\Z/2\Z)\wr \Z$. In this paper, we close this gap by proving the following result.
%It is known that all finitely generated non-abelian free groups can be generated by bireversible automata \cite{SteinbergVorobetsVorobets11}. Recently, Skipper and Steinberg have shown that the same can be said of many groups of the form $A\wr \Z$, where $A$ is a finite abelian group \cite{SkipperSteinberg20}. Such groups are sometimes called \emph{lamplighter groups}. Beyond these results, however, there are currently very few other examples of groups generated by bireversible automata, and finding more is the subject of active research. 

%Skipper and Steinberg's result does not cover all possible lamplighter groups over finite abelian groups. Indeed, they require some technical condition on the Sylow $2$-subgroup of the group $A$ (we refer the reader to \cite{SkipperSteinberg20} for the precise condition). In particular, their result does not cover the classical lamplighter group $(\Z/2\Z)\wr \Z$. In this paper, we close this gap by proving the following result.

\begin{thmx}\label{thm:MainThm}
For every non-trivial finite abelian group $A$, there exists a bireversible automaton generating the lamplighter group $A\wr \Z$.
\end{thmx}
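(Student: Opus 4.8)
The plan is to write down, for each non-trivial finite abelian group $A$, an explicit Mealy automaton $\mathcal{M}_A$ over an alphabet built from $A$ (a small power of $A$, or $A$ together with a few auxiliary letters) whose states realise a \emph{faithful} action of $A\wr\Z$ on the corresponding rooted tree, and then to verify the two facts that are not immediate: that $\mathcal{M}_A$ is bireversible, and that its states generate all of $A\wr\Z$ rather than a proper subgroup or a proper quotient. Concretely, I would start from one of the standard descriptions of $A\wr\Z$ as a transformation group --- for instance its affine action $x\mapsto sx+r$ on the ring of Laurent series $A((t))$, equivalently the action of $A[t,t^{-1}]\rtimes\langle t\rangle$ on a space of bi-infinite $A$-configurations, in the spirit of the Skipper--Steinberg construction --- and ``unfold'' that space into the one-sided tree $L[[t]]$ by an explicit coordinate change (interleaving the two halves of a configuration), chosen precisely so that the transported action of the standard generators of $A\wr\Z$ has only finitely many sections, i.e.\ is automatic. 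It is convenient to record the resulting automaton as a finite set of tiles (a square with the two states on its left and right sides and the two letters on its bottom and top, as above), since every condition to be checked then becomes a statement about how tiles fit together.

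Second, bireversibility. Invertibility, reversibility and bijectivity of $\delta$ translate, in the tile picture, into the assertions that a tile is determined by its (left, bottom), its (left, top), its (bottom, right), and its (top, right) pair of sides respectively; once $\mathcal{M}_A$ is on the table this is a finite, mechanical verification. If the automaton comes from an affine/Cayley-type action as above, this check should moreover be essentially uniform in $A$: left translations are bijections, so invertibility is automatic, and the swap $(q,l)\mapsto(\lambda(q,l),\rho(q,l))$ reduces to the left- and right-cancellativity built into the affine group. This is the step in which the cleverness of the construction is spent, but the verification itself carries no real difficulty.

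Third, identifying the group. One inclusion is routine: picking among the states (or short words in the states) elements $t$ and $c_a$ ($a\in A$) that visibly play the roles of the shift and of the lamp at position $0$, one reads off from the automaton that $c_ac_b=c_{a+b}$, that $c_a$ has the order of $a$ in $A$, and that the conjugates $t^nc_at^{-n}$ pairwise commute, which exhibits $G_{\mathcal{M}_A}$ as a quotient of $A\wr\Z$. For the reverse inclusion, if the construction was arranged so that the states already lie in a faithful copy of $A\wr\Z$ acting on $L[[t]]$ and generate it, then $G_{\mathcal{M}_A}=A\wr\Z$ as soon as faithfulness is verified; otherwise one produces a surjection $G_{\mathcal{M}_A}\twoheadrightarrow A\wr\Z$ and concludes using that $A\wr\Z$ is finitely generated and metabelian, hence residually finite, hence Hopfian, so that the composite $A\wr\Z\twoheadrightarrow G_{\mathcal{M}_A}\twoheadrightarrow A\wr\Z$ is an isomorphism and both surjections are isomorphisms.

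The real work lies in showing that nothing collapses, i.e.\ that the base group $\bigoplus_{\Z}A$ embeds into $\Sym(L[[t]])$ under this action: equivalently, that for every finite family of indices $i_1<\dots<i_k$ and nonzero $a_1,\dots,a_k\in A$ the element $t^{i_1}c_{a_1}t^{-i_1}\cdots t^{i_k}c_{a_k}t^{-i_k}$ moves some power series. This is exactly where bireversibility makes life harder, since the recursive structure of such an automaton blocks the usual ``trivial on a subtree, induct'' arguments. I would attack it by tracking a numerical invariant of the action --- a valuation on $L[[t]]$, or the depth of the deepest active lamp --- through the recursion defining $\tau_q$, and showing that any such product is forced to change some coordinate; alternatively, by exhibiting for each such product an explicit power series it fails to fix. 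This last point is the one I expect to be the main obstacle, and the one the author presumably settles by a careful analysis of the sections of $\mathcal{M}_A$.
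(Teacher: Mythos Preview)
Your proposal is a strategic outline rather than a proof: you never actually specify the automaton, and you explicitly defer the faithfulness step to ``a careful analysis of the sections of $\mathcal{M}_A$'' that you do not carry out. The Hopfian alternative you sketch has its own gap: it requires a surjection $G_{\mathcal{M}_A}\twoheadrightarrow A\wr\Z$, and you give no indication of how to produce one from the action on $L[[t]]$ alone. Since the whole point of the theorem is to close the cases Skipper--Steinberg could not handle, ``unfold their affine construction with a coordinate change'' is not by itself a plan --- one has to say what replaces the ring structure on $A$ that their argument uses.

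The paper's argument differs from what you anticipate in two substantive ways. First, the construction: the alphabet is $A\times A$, and the key move is to regard $(A\times A)[[t]]$ as a module over $R[[t]]$ with $R=\End(A\times A)$, rather than forcing a ring structure on $A$. The states then act affinely as $f\mapsto \gamma(t)f + \frac{(a,a)}{1-t}$ for an explicit unit $\gamma(t)\in R[[t]]$, and bireversibility is a three-line check. Second, faithfulness is obtained not by section analysis or by tracking a valuation, but by pure algebra: after the standard reduction to showing that $\bigoplus_{\Z}A$ injects, a putative kernel element becomes an identity $\sum_j \gamma(t)^{i_j}\frac{(a_j,a_j)}{1-t}=0$ in $(A\times A)[[t]]$; writing $\gamma(t)=\frac{\xi+(\zeta-\xi)t}{1-t}$, clearing denominators, and evaluating at $t=1$ kills every term but the top one and forces $a_n=0$, whence induction. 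This evaluate-at-$t=1$ trick is exactly the missing idea at the point you flag as the main obstacle, and it is considerably cleaner than either mechanism you propose.
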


In addition to producing all the lamplighter groups $A\wr \Z$, we note that our construction, in the cases already covered by the result of Skipper and Steinberg, appears to yield different automata and thus might still be of interest.

The paper is organised as follows. In Section \ref{sec:CayleyMachines}, we describe the automata that will be used to generate lamplighter groups, which we call \emph{twisted Cayley machines}. In Section \ref{sec:Proof}, we obtain these automata in terms of operations on formal power series with coefficient in a finite abelian group, viewed as a module over the ring of formal power series with coefficient in the endomorphisms of the group, and we use this point of view to prove Theorem \ref{thm:MainThm}.

\section{Twisted Cayley machines}\label{sec:CayleyMachines}

Cayley machines were first introduced by Krohn and Rhodes \cite{KrohnRhodes68}. Briefly, if $M$ is a monoid whose operation is denoted by $\cdot\colon M\times M \rightarrow M$, then the Cayley machine associated to $M$ is the Mealy automaton $\mathcal{C}(M) = (M,M,\cdot, \cdot)$. In other words, states and letters in this automaton are given by elements of $M$, and for a state $m_1\in M$ and a letter $m_2\in M$, both the output function and the transition function return $m_1\cdot m_2$.

Silva and Steinberg have shown \cite{SilvaSteinberg05} that if $A$ is a non-trivial abelian group, then the group generated by the Cayley machine $\mathcal{C}(A)$ is the lamplighter group $A\wr \Z$. However, the automaton $\mathcal{C}(A)$ is not bireversible. In order to obtain bireversible automata generating lamplighter groups, we introduce what we call here \emph{twisted Cayley machines}, which are obtained from Cayley machines by taking as the alphabet a cartesian product of the monoid and twisting the action on the second component by the first.

\begin{defn}\label{defn:TwistedCayleyMachine}
Let $M$ be a finite monoid and let $\cdot\colon M\times M \rightarrow M$ denote its operation. The \emph{twisted Cayley machine} associated to $M$ is $\TC(M) = (M, M\times M, \lambda_M, \rho_M)$, where
\begin{align*}
\lambda_M \colon M\times \left(M\times M\right) &\rightarrow M\times M \\
(a, (b, c)) &\mapsto (a\cdot b, a\cdot b\cdot c)
\end{align*}
and
\begin{align*}
\rho_M \colon M\times (M\times M) &\rightarrow M \\
(a, (b, c)) &\mapsto a\cdot c.
\end{align*}
\end{defn}

The advantage of these twisted Cayley machines is that if $M$ is a finite group, then $\TC(M)$ is a bireversible automaton.

\begin{prop}\label{prop:IsBirversible}
If $M$ is a finite group, the automaton $\TC(M)$ is bireversible.
\end{prop}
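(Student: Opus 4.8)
The plan is to verify directly the three defining properties of a bireversible automaton, using throughout the fact that in a finite group every left translation and every right translation is a bijection, and that a map between two finite sets of equal cardinality is a bijection as soon as it is injective. So $\TC(M)$ is invertible, reversible, and $\delta$ is a bijection will each be checked in turn.

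\emph{Invertibility and reversibility.} Fix $a \in M$. The map $\lambda_M(a,\cdot)\colon M\times M \to M\times M$ sends $(b,c)$ to $(ab, abc)$, and I would exhibit its inverse explicitly as $(x,y)\mapsto (a^{-1}x,\, x^{-1}y)$: if $b = a^{-1}x$ and $c = x^{-1}y$ then $ab = x$ and $abc = x\,x^{-1}y = y$, and conversely substituting $x=ab$, $y=abc$ recovers $(b,c)$. Hence $\lambda_M(a,\cdot)$ is a bijection for every $a$, so $\TC(M)$ is invertible. For reversibility, fix a letter $(b,c)\in M\times M$; the map $\rho_M(\cdot,(b,c))\colon M\to M$ is $a\mapsto ac$, i.e.\ right translation by $c$, which is a bijection of $M$.

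\emph{Bijectivity of $\delta$.} The map $\delta$ sends $(a,(b,c))$ to $\big((ab,abc),\, ac\big)$, and both its source and target have cardinality $|M|^{3}$, so it suffices to prove injectivity. Suppose $\delta(a,(b,c)) = \delta(a',(b',c'))$, that is $ab = a'b'$, $abc = a'b'c'$ and $ac = a'c'$. Cancelling $ab=a'b'$ on the left in the second equation gives $c = c'$; then $ac = a'c' = a'c$ gives $a = a'$ by right cancellation; and finally $ab = a'b'$ gives $b = b'$. Thus $\delta$ is injective, hence bijective. (Equivalently, one writes down the unique preimage of $((x,y),z)$ as $c = x^{-1}y$, $a = zy^{-1}x$, $b = a^{-1}x$.)

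I do not expect a genuine obstacle here: the statement amounts to careful bookkeeping with the group operation. The one essential point — and the only place where the hypothesis that $M$ is a group rather than an arbitrary monoid is used — is the availability of inverses and cancellation in the computations above; for a general monoid the machines $\mathcal C(M)$ and $\TC(M)$ need not be invertible or reversible, which is exactly why the twisting is applied to a group.
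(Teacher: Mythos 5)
Your proof is correct and follows essentially the same route as the paper: a direct verification of invertibility, reversibility, and the bijectivity of $\delta$, with the same explicit inverses for $\lambda_M(a,\cdot)$ and $\rho_M(\cdot,(b,c))$. The only cosmetic difference is that for $\delta$ the paper writes down the inverse map explicitly, whereas you argue via injectivity plus equal finite cardinalities (and note the explicit preimage in passing), which is equally valid.
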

\begin{proof}
For every $a\in M$, the map $\lambda_M(a, \cdot) \colon M\times M \rightarrow M\times M$ is invertible, with inverse
\begin{align*}
\lambda_M(a,\cdot)^{-1} \colon M\times M &\rightarrow M\times M \\
(b, c) &\mapsto (a^{-1} b, b^{-1}c)
\end{align*}
as is readily checked. Thus, the automaton $\TC(M)$ is invertible. Similarly, for every $(b, c)\in M\times M$, the map $\rho_M(\cdot, (b, c))\colon M\rightarrow M$ is invertible, with inverse
\begin{align*}
\rho_M(\cdot, (b,c))^{-1} \colon M &\rightarrow M \\
a &\mapsto ac^{-1}.
\end{align*}
Thus, the automaton $\TC(M)$ is reversible.

Lastly, the map 
\begin{align*}
\delta_M\colon M \times (M\times M) &\rightarrow (M\times M) \times M \\
(a, (b, c)) &\mapsto (\lambda_M(a, (b, c)), \rho_M(a, (b, c))) = ((ab, abc), ac)
\end{align*}
is invertible, with inverse
\begin{align*}
\delta_M^{-1}\colon (M\times M) \times M &\rightarrow M \times (M\times M) \\
((b,c), a) &\mapsto (ac^{-1}b, (b^{-1}ca^{-1}b, b^{-1}c)).
\end{align*}
Thus, $\TC(M)$ is bireversible.
\end{proof}

As we will see, it turns out that the twisted Cayley automaton of a non-trivial finite abelian group $A$ also generates the lamplighter group $A\wr \Z$. To prove this, however, it will be convenient to reinterpret everything in terms of formal power series, which we do in the next section.

\section{Formal power series and lamplighter groups}\label{sec:Proof}

As in the work of Skipper and Steinberg \cite{SkipperSteinberg20}, we will make use of formal power series to understand our groups. Our main innovation here is that, instead of equipping finite abelian groups with a ring structure, we will use the fact that every abelian group comes equipped with a natural module structure over the ring of its endomorphisms, which gives us more flexibility.

\subsection{Definitions and notations}\label{subsec:DefnsAndNotations}

For the remainder of this section, let $A$ be a non-trivial finite abelian group and let $R = \End(A\times A)$ be the ring of endomorphisms of the group $A\times A$. Let $(A\times A)[[t]]$ be the abelian group of formal power series over $A\times A$ and let $R[[t]]$ be the ring of formal power series over $R$ with commuting variable. Then, $(A\times A)[[t]]$ has a natural left-$R[[t]]$-module structure.

For any $h(t)\in (A\times A)[[t]]$, we can define the permutation
\begin{align*}
\alpha_{h(t)}\colon (A\times A)[[t]] &\longrightarrow (A\times A)[[t]]\\
f(t) &\longmapsto f(t)+h(t)
\end{align*}
which is simply addition by $h(t)$ in the abelian group $(A\times A)[[t]]$. Notice that the map that sends $h(t)$ to $\alpha_{h(t)}$ is an embedding of the abelian group $(A\times A)[[t]]$ into the group of bijections of $(A\times A)[[t]]$. We will denote by $T\leq \Sym((A\times A)[[t]])$ the image of this embedding.

For every $g(t)=g_0+g_1t+g_2t^2+\dots \in R[[t]]$, we can also define an endomorphism
\begin{align*}
\mu_{g(t)}\colon (A\times A)[[t]] &\longrightarrow (A\times A)[[t]]\\
f(t) &\longmapsto g(t)f(t)
\end{align*}
of the group $(A\times A)[[t]]$, which is simply left multiplication by $g(t)$. Notice that $\mu_{g(t)}$ is an automorphism of $(A\times A)[[t]]$ if and only if $g(t)$ is invertible, which is the case if and only if $g_0$ is invertible. The map that sends $g(t)$ to $\mu_{g(t)}$ is an embedding of the group of units $R[[t]]^{\times}$ into the group $\Sym((A\times A)[[t]])$, and we will denote its image by $U$. As the next easy and well-known lemma shows, $T$ is normalised by $U$.

\begin{lemma}\label{lemma:UNormalisesT}
For every $g(t)\in R[[t]]^{\times}$ and every $h(t)\in (A\times A)[[t]]$, we have $\mu_{g(t)}\circ \alpha_{h(t)}\circ \mu_{g(t)}^{-1} = \alpha_{g(t)h(t)} \in T$. Consequently, using the notation described above, $T$ is normalised by $U$.
\end{lemma}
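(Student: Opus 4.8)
The plan is to prove the identity $\mu_{g(t)}\circ \alpha_{h(t)}\circ \mu_{g(t)}^{-1} = \alpha_{g(t)h(t)}$ by evaluating the left-hand side on an arbitrary $f(t) \in (A\times A)[[t]]$ and using the module axioms, and then to deduce the normalisation statement as an immediate consequence. Since both sides are maps $(A\times A)[[t]] \to (A\times A)[[t]]$, it suffices to check they agree pointwise.

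First I would fix $g(t)\in R[[t]]^{\times}$ and $h(t)\in (A\times A)[[t]]$ and take an arbitrary $f(t)\in (A\times A)[[t]]$. Since $g(t)$ is a unit in $R[[t]]$, the map $\mu_{g(t)}$ is a bijection of $(A\times A)[[t]]$ with inverse $\mu_{g(t)^{-1}}$, so I may write $\mu_{g(t)}^{-1}(f(t)) = g(t)^{-1}f(t)$. Then I would compute, step by step: applying $\alpha_{h(t)}$ gives $g(t)^{-1}f(t) + h(t)$, and applying $\mu_{g(t)}$ gives $g(t)\bigl(g(t)^{-1}f(t) + h(t)\bigr)$. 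Now the key step is to invoke the left-$R[[t]]$-module structure of $(A\times A)[[t]]$: by distributivity of the action over addition and by associativity/unitality of the action, $g(t)\bigl(g(t)^{-1}f(t) + h(t)\bigr) = \bigl(g(t)g(t)^{-1}\bigr)f(t) + g(t)h(t) = f(t) + g(t)h(t)$. This is precisely $\alpha_{g(t)h(t)}(f(t))$, so the two maps agree. One should also note in passing that $g(t)h(t) \in (A\times A)[[t]]$, so $\alpha_{g(t)h(t)}$ genuinely lies in $T$.

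For the final sentence, I would observe that $U$ is by definition the image of the embedding $g(t)\mapsto \mu_{g(t)}$ of $R[[t]]^{\times}$, and $T$ is the image of $h(t)\mapsto \alpha_{h(t)}$. The identity just proved shows that conjugating an arbitrary element $\alpha_{h(t)}$ of $T$ by an arbitrary element $\mu_{g(t)}$ of $U$ lands back in $T$ (namely at $\alpha_{g(t)h(t)}$), and since every element of $U$ is of the form $\mu_{g(t)}$ and its inverse is $\mu_{g(t)^{-1}} = \mu_{g(t)}^{-1}$, this says exactly that $U$ normalises $T$.

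I do not anticipate any real obstacle here: the statement is, as advertised, easy and well known, and the only point requiring the slightest care is being explicit that the manipulation $g\bigl(g^{-1}f + h\bigr) = f + gh$ uses precisely the distributivity and the associativity/unitality of the left module action of $R[[t]]$ on $(A\times A)[[t]]$, rather than any ad hoc computation. If anything, the mildly delicate bookkeeping is keeping straight that $\mu_{g(t)}$ is an automorphism (which needs $g_0$ invertible, equivalently $g(t) \in R[[t]]^{\times}$) so that $\mu_{g(t)}^{-1}$ makes sense as $\mu_{g(t)^{-1}}$; this was already recorded in the discussion preceding the lemma, so it can simply be cited.
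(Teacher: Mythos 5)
Your proof is correct and is exactly the direct computation the paper has in mind; the paper itself omits the proof, labelling the lemma ``easy and well-known,'' and your verification that $g(t)\bigl(g(t)^{-1}f(t)+h(t)\bigr)=f(t)+g(t)h(t)$ via the module axioms is the intended argument.
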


In what follows, we will make frequent use of elements of the form $\alpha_{h(t)}$ where $h(t) = (a,a)+(a,a)t+(a,a)t^2+\dots = \frac{1}{1-t}(a,a)$ for some $a\in A$. For convenience, we will introduce a shorthand notation for these elements.

\begin{notation}
For $a\in A$, we will denote by $\alpha_a$ the element $\alpha_{\frac{1}{1-t}(a,a)}\in \Sym((A\times A)[[t]])$. Since $a$ is an element of $A$ and not of $A\times A$, this notation is not ambiguous, and there should therefore be no confusion.
\end{notation}

\subsection{Twisted Cayley machines and lamplighter groups through power series}

Let $\xi\in R$ be the automorphism of $A\times A$ given by $\xi(a,b) = (a,a+b)$ and let $\zeta\in R$ be the endomorphism of $A\times A$ given by $\zeta(a,b)=(b,b)$. Let us define
\[\gamma(t) = \xi + \zeta t + \zeta t^2 + \zeta t^3+ \dots = \xi + \frac{\zeta t}{1-t} \in R[[t]]\]
Notice that $\xi$ is invertible in $R$, so $\mu_{\gamma(t)}$ is an element of $U$ (see Section \ref{subsec:DefnsAndNotations} above for the definition of $U$).

\begin{prop}\label{prop:AutomatonGroupPowerSeries}
For every $a\in A$ and for every $f(t)=(a_1,a_2) + f^{\sigma}(t)t$, we have
\begin{align*}
\alpha_{a}\circ \mu_{\gamma(t)}(f(t)) &= (a+a_1, a+ a_1+a_2) + \alpha_{a+a_2}\circ \mu_{\gamma(t)}(f^{\sigma}(t))t\\
&=\lambda_A(a,(a_1,a_2)) + \alpha_{\rho_A(a,(a_1,a_2))}\circ \mu_{\gamma(t)}(f^{\sigma}(t))t
\end{align*}
where the maps $\lambda_A$ and $\rho_A$ are those of Definition \ref{defn:TwistedCayleyMachine}, with the operation of $A$ denoted additively. Consequently, the subgroup of $\Sym((A\times A)[[t]])$ generated by the set $\{\alpha_{a}\circ \mu_{\gamma(t)} \mid a\in A\}$ is exactly the automaton group $G_{\TC(A)}$ of the twisted Cayley machine of $A$ (see Definition \ref{defn:AutomatonGroup}).
\end{prop}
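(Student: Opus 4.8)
The plan is to verify the displayed recursion by direct computation and then use it, together with the universal recursion defining automaton maps (the formula for $\tau_q$ in Section~\ref{sec:CayleyMachines}), to identify the generators of $G_{\TC(A)}$ with the maps $\alpha_a \circ \mu_{\gamma(t)}$. First I would compute $\mu_{\gamma(t)}(f(t))$ for $f(t) = (a_1,a_2) + f^\sigma(t)t$. Writing $\gamma(t) = \xi + \frac{\zeta t}{1-t}$, we have $\gamma(t) f(t) = \xi f(t) + \frac{\zeta t}{1-t} f(t)$. The constant term of $\gamma(t)f(t)$ is $\xi(a_1,a_2) = (a_1, a_1 + a_2)$. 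For the higher-order part, I would observe that $\frac{1}{1-t} f(t)$ has the effect of replacing each coefficient by a partial sum, so that $\frac{\zeta t}{1-t} f(t)$ contributes, in degree $n \ge 1$, the term $\zeta$ applied to $(a_1 + \dots + a_n)$; since $\zeta(x,y) = (y,y)$, this contributes $(\text{second coord of }(a_1+\dots+a_n),\ \text{same})$ in each degree $n\ge 1$. The cleanest way to package this is to note that $\gamma(t) f(t)$ equals $\xi(a_1,a_2)$ plus $t$ times $\gamma(t)\big(f^\sigma(t)\big)$ plus a correction coming from the $\frac{1}{1-t}$ factor; concretely I expect to land on $\mu_{\gamma(t)}(f(t)) = (a_1, a_1+a_2) + \big(\mu_{\gamma(t)}(f^\sigma(t)) + \tfrac{1}{1-t}(a_2,a_2)\big)t$, which is exactly $\alpha_{a_2}\circ \mu_{\gamma(t)}(f^\sigma(t))$ after the shift. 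Applying $\alpha_a$ (i.e. adding $\frac{1}{1-t}(a,a)$) then shifts the constant term to $(a+a_1, a+a_1+a_2)$ and changes the tail's added constant from $(a_2,a_2)$ to $(a+a_2,a+a_2)$, yielding the first displayed line.

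Once the recursion is established, matching it against Definition~\ref{defn:TwistedCayleyMachine} is immediate: with $A$ written additively, $\lambda_A(a,(a_1,a_2)) = (a+a_1, a+a_1+a_2)$ and $\rho_A(a,(a_1,a_2)) = a + a_2$, which is precisely the second displayed line. To conclude the identification of groups, I would argue that the recursion $\alpha_a \circ \mu_{\gamma(t)}(f(t)) = \lambda_A(a,s_0) + \big(\alpha_{\rho_A(a,s_0)}\circ \mu_{\gamma(t)}\big)(f^\sigma(t))\, t$ for $s_0 = (a_1,a_2)$ has exactly the same shape as the defining recursion $\tau_q(f(t)) = \lambda(q,s_0) + \tau_{\rho(q,s_0)}(f^\sigma(t))\,t$ for the automaton $\TC(A)$, where the state $q = a$ corresponds to the map $\alpha_a \circ \mu_{\gamma(t)}$. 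Since such a recursion determines the map uniquely (a standard induction on the number of coefficients: if two maps satisfy the same recursion and agree on all proper tails then they agree), it follows that $\tau_a = \alpha_a \circ \mu_{\gamma(t)}$ for every state $a$ of $\TC(A)$. Therefore the set $\{\alpha_a \circ \mu_{\gamma(t)} \mid a \in A\}$ is exactly the generating set $\{\tau_q \mid q \in Q\}$ of $G_{\TC(A)}$, and the two subgroups of $\Sym((A\times A)[[t]])$ coincide.

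The only genuinely delicate point is the bookkeeping with the $\frac{1}{1-t}$ factor: one must be careful that multiplication by $\gamma(t)$ interacts with the shift $\sigma$ in the right way, since $\frac{1}{1-t}$ is not "local" in the coefficients. The key identity to pin down is that $\frac{1}{1-t}\big((a_1,a_2) + g(t)t\big) = (a_1,a_2) + \big(\frac{1}{1-t} g(t) + \frac{1}{1-t}(a_1,a_2)\big)t$, i.e. $\sigma\big(\frac{1}{1-t} h(t)\big) = \frac{1}{1-t}\big(h^\sigma(t) + (h_0)\cdot\frac{1}{1-t}\big)$ where $h_0$ is the constant term of $h$ — equivalently, peeling off one coefficient from $\frac{1}{1-t}h(t)$ leaves $\frac{1}{1-t}$ times the tail of $h$ shifted by the constant $h_0$ spread over all degrees. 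Once this is in hand, everything else is a routine unwinding of the definitions of $\xi$, $\zeta$, $\alpha_a$ and $\mu_{\gamma(t)}$. I do not anticipate any obstacle beyond this calculation; the group-theoretic conclusion is then formal.
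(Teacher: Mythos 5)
Your proposal is correct and follows essentially the same route as the paper: split $f(t)$ into its constant term plus $f^{\sigma}(t)t$, apply $\gamma(t)=\xi+\tfrac{\zeta t}{1-t}$ using $\xi(a_1,a_2)=(a_1,a_1+a_2)$ and $\zeta(a_1,a_2)=(a_2,a_2)$, absorb the $\tfrac{1}{1-t}$ corrections into $\alpha_{a+a_2}$, and conclude $\tau_a=\alpha_a\circ\mu_{\gamma(t)}$ by induction on coefficients. (One cosmetic slip: the restated identity in your last paragraph should read $\sigma\bigl(\tfrac{1}{1-t}h(t)\bigr)=\tfrac{1}{1-t}\bigl(h^{\sigma}(t)+h_0\bigr)$ rather than having $h_0\cdot\tfrac{1}{1-t}$ inside the parentheses; your first displayed form of that identity is the correct one and is the one your computation actually uses.)
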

\begin{proof}
We have
\begin{align*}
\alpha_a\circ \mu_{\gamma(t)}(f(t)) &= \gamma(t)f(t)+\frac{(a,a)}{1-t} \\
&= \gamma(t)(a_1,a_2) + \gamma(t)f^{\sigma}(t)t + \left((a,a) + \frac{(a,a)t}{1-t}\right) \\
&= \xi(a_1,a_2) + \frac{\zeta(a_1,a_2)t}{1-t} + \gamma(t)f^{\sigma}(t)t + \left((a,a) + \frac{(a,a)t}{1-t}\right)\\
&=(a+a_1, a+a_1+a_2) + \gamma(t)f^{\sigma}(t)t + \frac{t}{1-t}(a+a_2, a+a_2)\\
&=(a+a_1, a+ a_1+a_2) + \alpha_{a+a_2}\circ \mu_{\gamma(t)}(f^\sigma(t))t.
\end{align*}

Thus, by induction, we see that for every $a\in A$, the bijection $\tau_a\in \Sym((A\times A)[[t]])$ defined by the state $a$ of the twisted Cayley machine $\TC(A)$ is exactly the same bijection as $\alpha_a\circ \mu_{\gamma(t)}$.
\end{proof}

Thanks to Proposition \ref{prop:AutomatonGroupPowerSeries}, we now only have to prove that the subgroup of $\Sym((A\times A)[[t]])$ generated by $\{\alpha_a\circ \mu_{\gamma(t)} \mid a\in A\}$ is isomorphic to the lamplighter group $A\wr \Z$. We begin by observing that $A\wr \Z$ maps onto this group.

\begin{lemma}\label{lemma:HomLamplighterOntoG}
Let $\{s\}\cup A$ be the generating set for the lamplighter group $A\wr \Z = \left(\bigoplus_{\Z}A\right)\rtimes \Z$ where $s=1\in \Z$ and $A$ is identified with the copy of $A$ at position $0$ in $\left(\bigoplus_{\Z}A\right)$. Then, there exists a unique surjective homomorphism $\varphi\colon A\wr \Z \rightarrow G_{\TC(A)}$ such that $\varphi(s) = \mu_{\gamma(t)}$ and $\varphi(a)=\alpha_a$ for all $a\in A$.
\end{lemma}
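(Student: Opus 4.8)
The plan is to construct $\varphi$ from a presentation of $A \wr \Z$, using that the subgroup $T$ is abelian. Recall the standard description of $A\wr\Z = \left(\bigoplus_{\Z}A\right)\rtimes\Z$ by generators and relations with respect to the generating set $\{s\}\cup A$: the relations are those defining $A$ (so in particular $A$ is abelian), together with $[s^n a s^{-n}, a']=1$ for all $a,a'\in A$ and all $n\geq 1$, the point being that the copy of $A$ at position $n$ is $s^n A s^{-n}$ and that distinct positions commute (the case $n=0$ already being contained in the relations of $A$). Hence, to obtain a homomorphism $\varphi\colon A\wr\Z\to\Sym((A\times A)[[t]])$ with $\varphi(s)=\mu_{\gamma(t)}$ and $\varphi(a)=\alpha_a$ for $a\in A$, it suffices to verify that $\mu_{\gamma(t)}$ and the $\alpha_a$ satisfy these relations, and $\varphi$ is then automatically unique since $\{s\}\cup A$ generates $A\wr\Z$.

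First I would check that $a\mapsto\alpha_a$ is a homomorphism from $A$: since addition in $(A\times A)[[t]]$ is coordinatewise, $\alpha_a\circ\alpha_b=\alpha_{\frac{1}{1-t}(a,a)+\frac{1}{1-t}(b,b)}=\alpha_{\frac{1}{1-t}(a+b,a+b)}=\alpha_{a+b}$, so the relations defining $A$ hold (they hold in $(A\times A)[[t]]\cong T$, which is abelian). For the remaining relations, fix $n\geq 1$ and $a,a'\in A$. Since $\gamma(t)\in R[[t]]^{\times}$ and $R[[t]]^{\times}$ is a group, $\gamma(t)^n\in R[[t]]^{\times}$, and $\mu_{\gamma(t)}^n=\mu_{\gamma(t)^n}\in U$. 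By Lemma \ref{lemma:UNormalisesT}, $\mu_{\gamma(t)}^n\circ\alpha_a\circ\mu_{\gamma(t)}^{-n}=\alpha_{\gamma(t)^n\frac{1}{1-t}(a,a)}\in T$. As $\alpha_{a'}\in T$ as well and $T$ is abelian, these two elements commute, so $[\mu_{\gamma(t)}^n\alpha_a\mu_{\gamma(t)}^{-n},\alpha_{a'}]=1$. All the defining relations of $A\wr\Z$ are therefore satisfied, and we obtain the desired (unique) homomorphism $\varphi$.

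It remains to identify the image of $\varphi$, which is the subgroup generated by $\mu_{\gamma(t)}$ and the $\alpha_a$, with $G_{\TC(A)}$. On the one hand, by Proposition \ref{prop:AutomatonGroupPowerSeries}, $G_{\TC(A)}$ is generated by the elements $\alpha_a\circ\mu_{\gamma(t)}$, each of which manifestly lies in the image of $\varphi$; hence $G_{\TC(A)}\leq\operatorname{im}\varphi$. On the other hand, taking $a=0$ shows $\mu_{\gamma(t)}=\alpha_0\circ\mu_{\gamma(t)}\in G_{\TC(A)}$, and then $\alpha_a=(\alpha_a\circ\mu_{\gamma(t)})\circ\mu_{\gamma(t)}^{-1}\in G_{\TC(A)}$ for every $a\in A$; hence $\operatorname{im}\varphi\leq G_{\TC(A)}$. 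Thus $\operatorname{im}\varphi=G_{\TC(A)}$, and corestricting gives the claimed surjection $\varphi\colon A\wr\Z\to G_{\TC(A)}$.

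I do not expect any genuine obstacle here: every step reduces to Lemma \ref{lemma:UNormalisesT} together with the abelianness of $T$, plus Proposition \ref{prop:AutomatonGroupPowerSeries} for the identification of the image. The only mild care needed is to invoke the correct presentation of $A\wr\Z$ (that the relations $[s^n a s^{-n},a']=1$ for $n\geq 1$, together with those of $A$, suffice — which is standard), and to keep in mind that only surjectivity is asserted, so nothing needs to be said about the order of $\mu_{\gamma(t)}$ or the independence of the various conjugates of $A$; injectivity of $\varphi$ is a separate matter addressed later.
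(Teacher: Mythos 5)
Your proof is correct and follows essentially the same route as the paper: verify the defining relations of $A\wr\Z$ via Lemma \ref{lemma:UNormalisesT} and the abelianness of $T$, then identify the image with $G_{\TC(A)}$ using Proposition \ref{prop:AutomatonGroupPowerSeries} and the observation $\mu_{\gamma(t)}=\alpha_0\circ\mu_{\gamma(t)}$. The only differences are cosmetic (you restrict the commutation relations to $n\geq 1$ and spell out that $a\mapsto\alpha_a$ is a homomorphism).
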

\begin{proof}
The uniqueness of $\varphi$, if it exists, is obvious. By Proposition \ref{prop:AutomatonGroupPowerSeries}, the group $G_{\TC(A)}$ is generated by the set $\{\alpha_{a}\circ \mu_{\gamma(t)} \mid a\in A\}$. Using the fact that $\mu_{\gamma(t)} = \alpha_0\circ \mu_{\gamma(t)}$ is in this set, we conclude, multiplying every other generator by $\mu_{\gamma(t)}^{-1}$, that $\{\mu_{\gamma(t)}\}\cup \{\alpha_a\mid a\in A\}$ is also a generating set for $G_{\TC(A)}$. Thus, if the homomorphism $\varphi$ exists, it must be surjective.

It only remains to show that $\varphi$ exists. Let $A=\langle A \mid \mathcal{R} \rangle$ be a presentation of $A$. Then, we have the following presentation for the lamplighter group over $A$:
\[A\wr \Z = \langle s, A \mid \mathcal{R}, [s^{i}as^{-i}, b] \forall a,b\in A, \forall i\in \Z \rangle.\]
To show that $\varphi$ exists, it suffices to show that $G_{\TC(A)}$ satisfies the same relations. Since the map that sends $a$ to $\alpha_a$ is an isomorphism onto its image, it is clear that the relations $\mathcal{R}$ are satisfied. Furthermore, $\mu_{\gamma(t)}^{i}\circ \alpha_a \circ \mu_{\gamma(t)}^{-i}$ and $\alpha_b$ commute for every $a,b\in A$ and $i\in \Z$, since both are in $T$ by Lemma \ref{lemma:UNormalisesT}, which is an abelian group. This concludes the proof.
\end{proof}

To show that $G_{\TC(A)}$ is isomorphic to the lamplighter group $A\wr \Z$, it only remains to show that the homomorphism $\varphi$ of Lemma \ref{lemma:HomLamplighterOntoG} is injective. To do this, we will make use of the following easy facts about lamplighter groups.

\begin{lemma}\label{lemma:LamplighterAbelianNormalInSum}
Let $A\wr \Z = \left(\bigoplus_{\Z}A\right)\rtimes \Z$ be the lamplighter group over $A$, and let $N\trianglelefteq A\wr \Z$ be an abelian normal subgroup. Then, $N\trianglelefteq \bigoplus_{\Z}A$.
\end{lemma}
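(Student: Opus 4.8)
The statement to prove is Lemma~\ref{lemma:LamplighterAbelianNormalInSum}: any abelian normal subgroup $N$ of $A \wr \Z = \left(\bigoplus_{\Z}A\right)\rtimes \Z$ lies inside $B := \bigoplus_{\Z}A$.

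My plan is as follows. Write an element of $A\wr\Z$ as a pair $(v, n)$ with $v \in B$ and $n\in\Z$, where $\Z$ acts on $B$ by shifting coordinates; let me write this shift action as $v \mapsto {}^{n}v$. Suppose for contradiction that $N$ contains some element $(v, n)$ with $n \neq 0$. Since $B$ is itself abelian and normal, and $N \cap B \trianglelefteq A\wr\Z$, the quotient $N/(N\cap B)$ embeds into $(A\wr\Z)/B \cong \Z$, so it is infinite cyclic or trivial; in either case, after possibly replacing $(v,n)$ by a power, I may assume $n \neq 0$ is whatever is convenient.

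The key step is to exploit that $A$ is \emph{non-trivial}: pick $a \in A$ with $a \neq 0$ and consider the element $\delta_0 \in B$ which is $a$ at position $0$ and $0$ elsewhere, viewed as $(\delta_0, 0) \in A\wr\Z$. Since $N$ is normal, the conjugate $(\delta_0,0)(v,n)(\delta_0,0)^{-1}$ lies in $N$, and since $N$ is abelian it commutes with $(v,n)$. A direct computation of the commutator $[(\delta_0,0),(v,n)] = (\delta_0 - {}^{n}\delta_0, 0) \cdot(\text{stuff that cancels}) $ — more precisely $(\delta_0,0)(v,n)(\delta_0,0)^{-1}(v,n)^{-1} = (\delta_0 + {}^{?}v - {}^{?}v - {}^{n}\delta_0, 0)$, which simplifies to $(\delta_0 - {}^{n}\delta_0, 0)$. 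This must be trivial, forcing $\delta_0 = {}^{n}\delta_0$. But ${}^{n}\delta_0$ is supported at position $n \neq 0$, while $\delta_0$ is supported at position $0$, so $\delta_0 = {}^{n}\delta_0$ is impossible once $a \neq 0$. This contradiction shows every element of $N$ has trivial $\Z$-component, i.e.\ $N \subseteq B$. Since $B$ is normal, $N \trianglelefteq B$ as claimed.

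I expect the only mild obstacle to be bookkeeping the semidirect product multiplication carefully so that the commutator $[(\delta_0,0),(v,n)]$ really does collapse to $(\delta_0 - {}^{n}\delta_0, 0)$ with no leftover $v$-terms; this works because conjugating the pure-translation part $(0,n)$ by $(\delta_0,0)$ only ever involves $\delta_0$ and its shift, and the $v$-contributions from the two sides of the commutator cancel since $B$ is abelian. Everything else is immediate. (One can phrase the whole argument slightly more slickly: the centraliser in $A\wr\Z$ of a pure translation $(0,n)$ with $n\neq 0$ is contained in $B^{\langle n\rangle}$-fixed points, hence in the ``constant'' sequences, which do not lie in $B = \bigoplus_\Z A$ unless zero — but for a finite direct sum there are no nonzero such constants, so the centraliser of $(0,n)$ meets $B$ trivially, and then normality plus commutativity pins $N$ down. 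I will use whichever version reads more cleanly.)
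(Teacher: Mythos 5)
Your overall strategy is the paper's own (conjugate by a suitable element of the base group and exploit that $N$ is abelian), but the pivotal deduction is a non sequitur. From normality and abelianness of $N$ you correctly conclude that the conjugate $c=(\delta_0,0)(v,n)(\delta_0,0)^{-1}$ lies in $N$ and commutes with $(v,n)$; this gives $[c,(v,n)]=1$. What you then declare trivial, however, is the \emph{different} commutator $[(\delta_0,0),(v,n)]=c\cdot(v,n)^{-1}$. That element is only known to lie in $N$ (being a product of two elements of $N$); nothing forces it to be the identity, since $(\delta_0,0)$ need not belong to $N$ nor centralise it. (Compare $S_3$ with $N=A_3$: a $3$-cycle $x$ commutes with its conjugate $gxg^{-1}$ by a transposition $g$, yet $[g,x]=x\neq 1$.) Indeed, in the very situation you are trying to rule out, your own computation identifies $[(\delta_0,0),(v,n)]$ with $(\delta_0-{}^{n}\delta_0,\,0)$, a genuinely non-trivial element when $n\neq 0$ and $a\neq 0$; so the assertion ``this must be trivial'' is exactly the point that has not been established, and the contradiction you draw from it is unearned.

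The gap is small and repairable, and the repair lands you on the paper's computation. Having shown that $w:=\delta_0-{}^{n}\delta_0$ satisfies $(w,0)=c\cdot(v,n)^{-1}\in N\cap\bigoplus_{\Z}A$, apply abelianness once more: $(w,0)$ must commute with $(v,n)$, which forces $w={}^{n}w$, i.e.\ $\delta_0-2\,{}^{n}\delta_0+{}^{2n}\delta_0=0$. The three terms are supported at the positions $0$, $n$, $2n$, which are pairwise distinct for $n\neq 0$, so evaluating at position $0$ gives $a=0$, a contradiction. This is precisely the commutator the paper evaluates, $[\eta\kappa s^{j}\eta^{-1},\kappa s^{j}]=\eta\,(s^{j}\eta^{-2}s^{-j})\,(s^{2j}\eta s^{-2j})$, which is legitimately trivial (both entries lie in $N$) and visibly non-trivial for $j\neq 0$. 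Your closing ``slicker'' variant suffers from a related slippage: $N$ is not known to contain a pure translation $(0,n)$, only some $(v,n)$ with $v$ possibly non-zero, and powers of $(v,n)$ are not pure translations either; the centraliser observation does still hold for $(v,n)$ (an element $u$ of the base commutes with $(v,n)$ if and only if $u={}^{n}u$, hence $u=0$ by finiteness of supports), but you again need the normality step above to produce a non-trivial element of $N\cap\bigoplus_{\Z}A$ before that observation yields a contradiction.
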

\begin{proof}
Let $\kappa s^{j}\in N$ be any element, where $\kappa\in \bigoplus_{\Z}A$ and $j\in \Z$. Let $\eta\in \bigoplus_{\Z}A$ be an element different from the identity (which exists, since $A$ is non-trivial). We have
\begin{align*}
[\eta\kappa s^{j}\eta^{-1}, \kappa s^{j}] &= \eta\kappa s^{j}\eta^{-1}\kappa s^{j} \eta s^{-j}\kappa^{-1} \eta^{-1} s^{-j} \kappa^{-1} \\
&=\eta \kappa (s^{j} \eta^{-1}\kappa s^{-j}) (s^{2j} \eta s^{-2j}) (s^{j}\kappa^{-1}\eta^{-1}s^{-j})\kappa^{-1}\\
%&=\eta (s^{j} \eta^{-1} s^{-j}) (s^{2j} \eta s^{-2j}) (s^{j}\eta^{-1}s^{-j})\\
&=\eta (s^{j} \eta^{-2} s^{-j}) (s^{2j} \eta s^{-2j}).
\end{align*}
Since $N$ is normal and abelian, we must have $[\eta\kappa s^{j}\eta^{-1}, \kappa s^{j}] =1$. However, from the computation above, this is only possible if $j=0$. This shows that every element of $N$ is in $\bigoplus_{\Z}A$.

\end{proof}

\begin{lemma}\label{lemma:SufficesToCheckKernelInSum}
Let $A\wr \Z = \left(\bigoplus_{\Z}A\right)\rtimes \Z$ be the lamplighter group over $A$, and let $N\trianglelefteq A\wr \Z$ be a normal subgroup. Then, $N$ is trivial if and only if its intersection with $\bigoplus_{\Z}A$ is trivial.
\end{lemma}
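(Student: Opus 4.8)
The \emph{only if} direction is trivial, so assume $N\cap \bigoplus_{\Z}A = 1$ and aim to deduce $N = 1$. The plan is to first show $N\leq \bigoplus_{\Z}A$, from which $N = N\cap \bigoplus_{\Z}A = 1$ follows at once. For this, observe that the commutator subgroup $[N,\bigoplus_{\Z}A]$ is contained in $N$ (because $N$ is normal) and in $\bigoplus_{\Z}A$ (because $\bigoplus_{\Z}A$ is normal), so $[N,\bigoplus_{\Z}A]\leq N\cap\bigoplus_{\Z}A = 1$. Hence $N$ centralises $\bigoplus_{\Z}A$, and everything reduces to identifying the centraliser of $\bigoplus_{\Z}A$ in $A\wr\Z$.

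To pin down that centraliser, take an arbitrary element $\kappa s^{j}$ with $\kappa\in\bigoplus_{\Z}A$ and $j\in\Z$, and let $a\in A$ be non-trivial, viewed as the element of $\bigoplus_{\Z}A$ supported at position $0$. Conjugating $a$ by $\kappa s^{j}$ produces the copy of $a$ at position $-j$ (up to the sign convention for the action of $s$), which differs from $a$ as soon as $j\neq 0$; hence $\kappa s^{j}$ centralises $\bigoplus_{\Z}A$ only if $j=0$, i.e.\ only if $\kappa s^{j}\in\bigoplus_{\Z}A$. Therefore $C_{A\wr\Z}(\bigoplus_{\Z}A) = \bigoplus_{\Z}A$, which gives $N\leq\bigoplus_{\Z}A$ and completes the argument.

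I do not expect a genuine obstacle here: the statement is short, and the only point requiring any care is that the proof really does use the hypothesis that $A$ is non-trivial, namely to exhibit an element of $\bigoplus_{\Z}A$ that is moved by every non-zero power of $s$ — exactly as in the proof of Lemma \ref{lemma:LamplighterAbelianNormalInSum}. An alternative, more explicit route in the style of that lemma is to take any $\kappa s^{j}\in N$ and any non-trivial $\eta\in\bigoplus_{\Z}A$, and to note that $[\eta,\kappa s^{j}] = \eta\cdot(s^{j}\eta^{-1}s^{-j})$ lies in $N\cap\bigoplus_{\Z}A$ and is non-trivial whenever $j\neq 0$; this forces $j=0$ for every element of $N$, so $N\leq\bigoplus_{\Z}A$ and hence $N=1$.
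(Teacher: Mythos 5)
Your proof is correct, but it is organised differently from the paper's. The paper passes through the derived subgroup: it notes that $N'\leq (A\wr\Z)'\leq\bigoplus_{\Z}A$, so either $N'\neq 1$ and the intersection is already non-trivial, or $N$ is abelian and Lemma \ref{lemma:LamplighterAbelianNormalInSum} (abelian normal subgroups lie in the base) places $N$ inside $\bigoplus_{\Z}A$. You instead use that two normal subgroups with trivial intersection commute, so $[N,\bigoplus_{\Z}A]\leq N\cap\bigoplus_{\Z}A=1$, and then identify $C_{A\wr\Z}(\bigoplus_{\Z}A)=\bigoplus_{\Z}A$; in both routes the endgame is a commutator computation showing that an element $\kappa s^{j}$ with $j\neq 0$ fails to commute with some non-trivial element of the base, and both use essentially the non-triviality of $A$. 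Your version is self-contained and the commutator $[\eta,\kappa s^{j}]=\eta\,(s^{j}\eta^{-1}s^{-j})$ you compute is simpler than the one $[\eta\kappa s^{j}\eta^{-1},\kappa s^{j}]$ appearing in the paper's Lemma \ref{lemma:LamplighterAbelianNormalInSum}; what the paper's factorisation buys is a reusable statement about abelian normal subgroups, whereas yours isolates the equally standard fact that the base group of the lamplighter is self-centralising. One tiny point worth making explicit in your second, ``more explicit'' variant: $\eta\,(s^{j}\eta^{-1}s^{-j})$ is non-trivial for $j\neq 0$ because a non-trivial finitely supported element of $\bigoplus_{\Z}A$ cannot be invariant under a non-zero shift (taking $\eta$ supported at a single position makes this immediate).
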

\begin{proof}
Let $N'\trianglelefteq N$ be the derived subgroup of $N$. Since the derived subgroup of $A\wr \Z$ is $\bigoplus_{\Z}A$, we must have $N'\leq \bigoplus_{\Z}A$. Thus, on the one hand, if $N'\ne 1$, the intersection between $N$ and $\bigoplus_{\Z}A$ is non-trivial. On the other hand, if $N'=1$, then $N$ is abelian, which means that it is contained in $\bigoplus_{\Z}A$ by Lemma \ref{lemma:LamplighterAbelianNormalInSum}.
\end{proof}

\begin{thm}
The automaton group $G_{\TC(A)}$ is isomorphic to $A\wr \Z$.
\end{thm}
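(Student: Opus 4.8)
The plan is to prove that the surjection $\varphi\colon A\wr \Z\to G_{\TC(A)}$ produced in Lemma~\ref{lemma:HomLamplighterOntoG} is injective; combined with surjectivity this gives $G_{\TC(A)}\cong A\wr\Z$. By Lemma~\ref{lemma:SufficesToCheckKernelInSum} it is enough to show that $\ker\varphi$ meets $\bigoplus_{\Z}A$ trivially. Every element of $\bigoplus_{\Z}A$ can be written as $\prod_{i}s^{i}a_is^{-i}$ with $a_i\in A$ and all but finitely many $a_i$ trivial. Since $\varphi(s)=\mu_{\gamma(t)}$ and $\varphi(a)=\alpha_a=\alpha_{\frac{1}{1-t}(a,a)}$, and since $\mu_{\gamma(t)}^{i}=\mu_{\gamma(t)^{i}}$ with $\gamma(t)^{i}\in R[[t]]^{\times}$, Lemma~\ref{lemma:UNormalisesT} yields $\varphi(s^{i}a_is^{-i})=\alpha_{\gamma(t)^{i}\frac{1}{1-t}(a_i,a_i)}$; as $T$ is abelian and $h(t)\mapsto\alpha_{h(t)}$ is a homomorphism, $\varphi\bigl(\prod_{i}s^{i}a_is^{-i}\bigr)=\alpha_{h(t)}$ with $h(t)=\sum_{i}\gamma(t)^{i}\frac{1}{1-t}(a_i,a_i)$. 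Because $\frac{1}{1-t}$ is a central unit of $R[[t]]$, this element lies in $\ker\varphi$ precisely when $\sum_{i\in\Z}\gamma(t)^{i}(a_i,a_i)=0$ in $(A\times A)[[t]]$. Thus the theorem reduces to the following claim: \emph{if $\sum_{i\in\Z}\gamma(t)^{i}(a_i,a_i)=0$ is a finite sum, then $a_i=0$ for all $i$.}

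To prove the claim I would first normalise the index set. Multiplying the relation by $\gamma(t)^{-m}$, where $m$ is the least index with $a_m\neq 0$, is an invertible operation since $\gamma(t)$ is a unit of $R[[t]]$, so after reindexing we may assume the relation is $\sum_{j=0}^{N}\gamma(t)^{j}(b_j,b_j)=0$ with $b_j\in A$, and it suffices to conclude that every $b_j=0$. Next put $\delta(t)=(1-t)\gamma(t)=(1-t)\xi+t\zeta\in R[t]$, so that $(1-t)^{j}\gamma(t)^{j}=\delta(t)^{j}$; multiplying the relation by $(1-t)^{N}$ turns it into the \emph{polynomial} identity
\[\sum_{j=0}^{N}(1-t)^{N-j}\,\delta(t)^{j}\,(b_j,b_j)=0\qquad\text{in }(A\times A)[t].\]
The relevant structural facts are that $\delta(1)=\zeta$, that $\zeta$ is idempotent (so $\zeta^{j}=\zeta$ for $j\geq 1$), and that $\zeta$ fixes every diagonal element $(b,b)$; hence $\delta(1)^{j}(b,b)=(b,b)$ for all $j\geq 0$.

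The coefficients $b_j$ can then be read off one at a time by descending induction. Evaluating the displayed identity at $t=1$ kills every summand with $j<N$ and leaves $(b_N,b_N)=0$, so $b_N=0$. Assuming $b_j=0$ for all $j>r$, the identity collapses to $(1-t)^{N-r}\sum_{j=0}^{r}(1-t)^{r-j}\delta(t)^{j}(b_j,b_j)=0$; since multiplication by $1-t$ is injective on $(A\times A)[t]$, the second factor is itself $0$, and evaluating it at $t=1$ gives $(b_r,b_r)=0$, i.e.\ $b_r=0$. Therefore all $b_j$ vanish, which proves the claim, shows $\ker\varphi\cap\bigoplus_{\Z}A$ is trivial, and hence that $\varphi$ is an isomorphism.

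The step I expect to be the real obstacle is not the coefficient extraction, which is essentially formal once set up, but finding the right reformulation: recognising that after clearing denominators by $(1-t)^{N}$ the relation becomes a genuine polynomial identity whose behaviour at $t=1$ is governed by the single idempotent $\zeta=\delta(1)$. A secondary point requiring care is the bookkeeping in the first paragraph — tracking exactly what $\varphi$ does to an arbitrary element of $\bigoplus_{\Z}A$ via Lemma~\ref{lemma:UNormalisesT} and the commutativity of $T$, and justifying that one may discard the central unit $\frac{1}{1-t}$ and rescale the index range by a power of $\gamma(t)$. After that, the only ingredients are that $\zeta$ is an idempotent fixing the diagonal of $A\times A$ and that $1-t$ acts injectively on polynomials.
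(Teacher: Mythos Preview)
Your argument is correct and follows essentially the same route as the paper: reduce via Lemma~\ref{lemma:SufficesToCheckKernelInSum} to injectivity on $\bigoplus_{\Z}A$, translate a kernel element into the power-series identity $\sum_i \gamma(t)^i(a_i,a_i)/(1-t)=0$, normalise so the indices are non-negative, clear denominators by a power of $1-t$ (your $\delta(t)=(1-t)\gamma(t)$ is exactly the paper's $\xi+(\zeta-\xi)t$), and then evaluate the resulting polynomial identity at $t=1$ to kill the top coefficient and induct. The only cosmetic difference is that you spell out the inductive step by factoring $(1-t)^{N-r}$ and invoking injectivity of multiplication by $1-t$ on $(A\times A)[t]$, whereas the paper simply restarts the argument with one fewer term.
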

\begin{proof}
To prove the result, we need to show that the homomorphism $\varphi\colon A\wr \Z \rightarrow G_{\TC(A)}$ of Lemma \ref{lemma:HomLamplighterOntoG} is injective. By Lemma \ref{lemma:SufficesToCheckKernelInSum}, it suffices to show that the restriction of $\varphi$ to $\bigoplus_{\Z} A$ is injective.

Let $k=(t^{i_1}a_1t^{-i_1})(t^{i_2}a_2t^{-i_2})\dots (t^{i_n}a_nt^{-i_n})\in \bigoplus_{\Z} A$ be an element in the kernel of $\varphi$, where $a_j\in A$ and $i_1< i_2 < \dots < i_n \in \Z$. Replacing $k$ by $t^{-i_1}kt^{i_i}$, we may assume without loss of generality that $i_1=0$. We then have
\[1=\varphi(k)=\alpha_{a_1} \circ (\mu_{\gamma(t)^{i_2}}\circ \alpha_{a_2}\circ \mu_{\gamma(t)^{-i_2}})\circ \dots \circ(\mu_{\gamma(t)^{i_n}}\circ \alpha_{a_n}\circ \mu_{\gamma(t)^{-i_n}}).\]
Thus, using Lemma \ref{lemma:UNormalisesT}, we find that 
\begin{align*}
0&=\frac{(a_1,a_1)}{1-t}+\gamma(t)^{i_2}\frac{(a_2,a_2)}{1-t} + \dots + \gamma(t)^{i_n}\frac{(a_n,a_n)}{1-t}\\
&=\frac{1}{1-t}(a_1,a_1) + \frac{(\xi+(\zeta-\xi)t)^{i_2}}{(1-t)^{i_2+1}}(a_2,a_2) + \dots + \frac{(\xi+(\zeta-\xi)t)^{i_n}}{(1-t)^{i_n+1}}(a_n,a_n)
\end{align*}
where we used the fact that $\gamma=\frac{\xi+(\zeta-\xi)t}{1-t}$ and that $t$ is a commuting variable. Multiplying this equation on the left by $(1-t)^{i_n+1}$, we obtain
\[0=(1-t)^{i_n}(a_1,a_1) + (1-t)^{i_n-i_2}(\xi + (\zeta-\xi)t)^{i_2}(a_2,a_2) + \dots + (\xi + (\zeta - \xi)t)^{i_n}(a_n,a_n).\]
Evaluating this expression in $t=1$, which we can do because $1$ commutes with every element of the ring $R[[t]]$, we find
\begin{align*}
0&=(\xi+(\zeta-\xi))^{i_n}(a_n,a_n) \\
&=\zeta^{i_n}(a_n,a_n)\\
&=(a_n,a_n)
\end{align*}
since $\zeta(a_n,a_n)=(a_n,a_n)$ from the definition of $\zeta$. This shows that $a_n=0$. By  induction, we conclude that $a_j=0$ for all $1\leq j \leq n$ and thus that $k=0$. Therefore, $\varphi$ is injective on $\bigoplus_{\Z} A$ and is consequently an isomorphism between $A\wr \Z$ and $G_{\TC(A)}$.
\end{proof}

\bibliographystyle{plain}
\bibliography{../biblio/biblio}

\begin{thebibliography}{10}

\bibitem{FrancoeurMitrofanov21}
Dominik Francoeur and Ivan Mitrofanov.
\newblock On the existence of free subsemigroups in reversible automata
  semigroups.
\newblock {\em Groups Geom. Dyn.}, 15(3):1103--1132, 2021.

\bibitem{GlasnerMozes05}
Yair Glasner and Shahar Mozes.
\newblock Automata and square complexes.
\newblock {\em Geom. Dedicata}, 111:43--64, 2005.

\bibitem{Grigorchuk83}
Rostislav~I. Grigorchuk.
\newblock On the {M}ilnor problem of group growth.
\newblock {\em Dokl. Akad. Nauk SSSR}, 271(1):30--33, 1983.

\bibitem{GuptaSidki83}
Narain Gupta and Sa\"{\i}d Sidki.
\newblock On the {B}urnside problem for periodic groups.
\newblock {\em Math. Z.}, 182(3):385--388, 1983.

\bibitem{KambitesSilvaSteinberg06}
Mark Kambites, Pedro~V. Silva, and Benjamin Steinberg.
\newblock The spectra of lamplighter groups and {C}ayley machines.
\newblock {\em Geom. Dedicata}, 120:193--227, 2006.

\bibitem{Klimann18}
Ines Klimann.
\newblock To infinity and beyond.
\newblock In {\em 45th {I}nternational {C}olloquium on {A}utomata, {L}anguages,
  and {P}rogramming}, volume 107 of {\em LIPIcs. Leibniz Int. Proc. Inform.},
  pages Art. No. 131, 12. Schloss Dagstuhl. Leibniz-Zent. Inform., Wadern,
  2018.

\bibitem{KrohnRhodes68}
Kenneth Krohn and John Rhodes.
\newblock Complexity of finite semigroups.
\newblock {\em Ann. of Math. (2)}, 88:128--160, 1968.

\bibitem{MacedonskaNekrashevychSushchansky00}
O.~Macedo\'{n}ska, V.~Nekrashevych, and V.~Sushchansky.
\newblock Commensurators of groups and reversible automata.
\newblock {\em Dopov. Nats. Akad. Nauk Ukr. Mat. Prirodozn. Tekh. Nauki},
  (12):36--39, 2000.

\bibitem{SilvaSteinberg05}
P.~V. Silva and B.~Steinberg.
\newblock On a class of automata groups generalizing lamplighter groups.
\newblock {\em Internat. J. Algebra Comput.}, 15(5-6):1213--1234, 2005.

\bibitem{SkipperSteinberg20}
Rachel Skipper and Benjamin Steinberg.
\newblock Lamplighter groups, bireversible automata, and rational series over
  finite rings.
\newblock {\em Groups Geom. Dyn.}, 14(2):567--589, 2020.

\bibitem{SteinbergVorobetsVorobets11}
Benjamin Steinberg, Mariya Vorobets, and Yaroslav Vorobets.
\newblock Automata over a binary alphabet generating free groups of even rank.
\newblock {\em Internat. J. Algebra Comput.}, 21(1-2):329--354, 2011.

\bibitem{VorobetsVorobets10}
Mariya Vorobets and Yaroslav Vorobets.
\newblock On a series of finite automata defining free transformation groups.
\newblock {\em Groups Geom. Dyn.}, 4(2):377--405, 2010.

\end{thebibliography}

\end{document}